\title[On the Growth of Deviations]{On the Growth of Deviations}
\subjclass[2010]{Primary: 13D02; Secondary: 16E45, 13D40, 16S37, 05C25, 05C38. }
\keywords{ Deviations, DG algebra resolution,  Poincar\'e series, Hilbert series, Betti numbers, Koszul algebra, Golod ring, initial ideal, lex-segment ideal, edge ideal.}
\author[Adam Boocher]{Adam Boocher}
\address{Adam Boocher \\ School of Mathematics \\ University of Edinburgh \\James Clerk Maxwell Building, Mayfield Road \\Edinburgh EH9 3JZ, Scotland}
\email{adam.boocher@ed.ac.uk}
 \author[Alessio D'Al\`{i}]{Alessio D'Al\`{i}}
\address{Alessio D'Al\`{i} \\ Dipartimento di Matematica \\ Universit\`{a} degli Studi di Genova \\Via Dodecaneso 35 \\16146 Genova, Italy}
\email{dali@dima.unige.it}
 \author[Elo\'isa Grifo]{Elo\'isa Grifo}
\address{Elo\'isa Grifo \\Department of Mathematics \\  University of Virginia \\141 Cabell Drive, Kerchof Hall \\Charlottesville, VA 22904, USA}
\email{er2eq@virginia.edu}
\author[J. Monta\~{n}o]{Jonathan Monta\~{n}o}
\address{Jonathan Monta\~{n}o \\ Department of Mathematics \\University of Kansas \\405 Snow Hall, 1460 Jayhawk Blvd \\Lawrence, KS 66045}
\email{jmontano@ku.edu}
\author[Alessio Sammartano]{Alessio Sammartano}
\address{Alessio Sammartano \\ Department of Mathematics \\ Purdue University \\150 North University Street \\West Lafayette, IN 47907, USA}
\email{asammart@purdue.edu}
\newtheorem{thm}{\bf Theorem}[section]
\newtheorem{prop}[thm]{\bf Proposition}
\theoremstyle{definition}
\theoremstyle{remark}
\newtheorem{remark}[thm]{\bf Remark}
\newtheorem{question}[thm]{\bf Question}
\newtheorem{example}[thm]{\bf Example}
\numberwithin{equation}{section}
\newcommand{\be}[2]{\beta_{#1,#2}}
\def \edim{{\operatorname{edim\, }}}
\def\Der{\operatorname{Der}}
\def\Tor{\operatorname{Tor}}
\def\Hom{\operatorname{Hom}}
\def\Ext{\operatorname{Ext}}
\def \initial{{\operatorname{in}}}
\def \gin{{\operatorname{gin}}}
\def \ch{{\operatorname{char }}}
\def \Card{{\operatorname{Card}}}
\DeclareMathOperator{\HS}{HS}
\DeclareMathOperator{\HF}{HF}
\DeclareMathOperator{\Gl}{GL}
\def \f1{\mathbf{1}}
\def\xi{x}
\def\zi{z}
\def\ls{\leqslant}
\def\gs{\geqslant}
\def\fm{\mathfrak{m}}
\def\ee{\varepsilon}
\def\fn{\mathfrak{n}}
\def \CC{\mathbb C}
\def \RR{\mathbb R}
\def \NN{\mathbb N}
\def \G{\mathcal G}
\begin{document}

\begin{abstract}
The deviations of a graded algebra are a sequence of integers that determine the Poincar\'e series of its residue field and arise as the number of generators of certain DG algebras. In a sense, deviations measure how far a ring is from being a complete intersection.  In this paper we study extremal deviations among those of algebras with a fixed Hilbert series. 
In this setting, we prove that, like the Betti numbers, deviations do not decrease when passing to an initial ideal and are maximized by the Lex-segment ideal. We also prove that deviations grow exponentially for Golod rings and for certain quadratic monomial algebras. 
\end{abstract}

\maketitle

\section{Introduction} 
\label{SectionPreliminaries}
\noindent Let $R = S/I$, where $S=k[T_1,\ldots,T_n]$ is a polynomial ring over a field $k$ and $I\subseteq (T_1,\ldots,T_n)^2$ is a homogeneous ideal.  With the pair $(S,R)$ we can associate two important sets of Betti numbers, namely 
$$\beta_i^S(R) = \dim_k \Tor_i^S(R,k) \qquad \textrm{ and } \qquad \beta_i^R(k) = \dim_k \Tor_i^R(k,k).$$
Equivalently, these numbers are given by the ranks of the free modules appearing in minimal free resolutions of $R$ over $S$ and $k$ over $R$ respectively.  To avoid confusion, and to emphasize that the latter sequence is typically infinite, we will often refer instead to the Poincar\'e series $P_k^R(\zi)= \sum_{i=1}^\infty\beta_{i}^R(k)\zi^i$.  
Much work has been devoted to studying the growth of Betti numbers for various classes of rings.  For instance, by the Auslander-Buchsbaum-Serre Theorem, $P_k^R(\zi)$ is a polynomial if and only if $R$ is regular, i.e., a polynomial ring. 
 In the next simplest case,  the sequence $\{\beta_i^R(k)\}$ has polynomial growth if  $R$ is a complete intersection
 (cf. \cite{Tate}).
Finally, if $R$ is not a complete intersection, then the $\beta_i^R(k)$ have  exponential growth (cf. \cite{AvramovBetti}).  

In this paper we study Betti numbers from the point of view of a related invariant: the set of deviations of $R$.  Since $P_k^R(\zi)$ has integer coefficients and  constant term equal to 1, there exist uniquely determined integers $\ee_{i}=\ee_{i}(R)$,
called the {deviations} of $R$,
such that the following infinite product expansion holds (cf. \cite[7.1.1]{Avramov6Lectures}):

\begin{equation}\label{PoincareDeviations}
P_k^R(\zi)=\prod_{i= 1}^\infty \frac{(1+\zi^{2i-1})^{\ee_{2i-1}}}{(1-\zi^{2i})^{\ee_{2i}}}.
\end{equation}

In the sense explained in Remark \ref{Remark about complexity}, the deviations measure the complexity of $R$.
They play a crucial role in Avramov's proof that the complete intersection property localizes (cf. \cite{AvramovCI}).
In addition, the $\ee_i(R)$ appear naturally as the number of generators of degree $i$ in an acyclic closure of $k$ over $R$,
  as the number of generators of degree $i-1$ in a minimal model of $R$ over $S$,
  and as the ranks of the components of the homotopy Lie algebra of $R$, cf. Section \ref{Preliminaries}.

Our first results concern the deviations of algebras with fixed Hilbert function. 
In Theorem \ref{degenerations have bigger deviations} we prove that, for any term order $<$, the algebra presented by $\initial_< I$ has larger deviations than $R$. 
Moreover, we show in Theorem \ref{Main Theorem on Lex} that the algebra presented by the {lex-segment} ideal 
has the largest deviations. 
This is a generalization of a result of Peeva which states that the lex-segment ideal attains the largest values of $P_k^{R}(\zi)$ among all $I$ with the same Hilbert function (cf. \cite{Peeva0Borel}). 
Peeva's theorem in turn relies on its analogue for $\beta_i^S(R)$, which is due to Bigatti, Hulett, and Pardue (cf. \cite{Bigatti, Hulett, Pardue}). 
Our methods use their techniques to provide a proof in all characteristics. In characteristic zero, we also present a simpler proof using Golod rings. 
Golod rings are those rings $R$ whose Poincar\'e series is maximal among all ideals with a fixed embedding dimension and set of Betti numbers $\beta_i^S(R)$;
see \cite[6.16]{McculloughPeeva} for a  list of Golod rings.
In particular, if $R$ is a Golod ring, then $P_k^R(\zi)$ is a rational function of $\zi$ determined by the  $\beta_i^S(R)$ and $\edim R$.

In the second half of the paper we turn to an analysis of the asymptotic behavior of deviations.  
A result of Babenko shows that the radius of convergence of the generating function of the $\ee_i(R)$ coincides with that of $P_k^R(\zi)$,
in the non-trivial case when $\ee_i(R) \ne 0$ infinitely often (cf. \cite[Theorem 1]{Babenko}).
Furthermore, there exist subsequences of $\{\ee_i(R)\}$
 which are not too sparse and have an exponential lower bound,
provided that $R$ is not a complete intersection (cf. \cite{AvramovBetti,FelixThomas}).
In Theorem \ref{goloddev} and Theorem \ref{clawfreedev} we prove that, for Golod rings and for certain Koszul algebras, the sequence of  deviations 
is asymptotically equal to $\{\frac{b\rho^i}{i}\}_{i\gs 1}$ for some $\rho>1, b\in \mathbb{N}$.

\section{Preliminaries}\label{Preliminaries}
We now introduce the notions of acyclic closures and minimal models, and how they encode the deviations.  We set up some notation and generally follow \cite{Avramov6Lectures} throughout the paper. Although the results therein are proved for  local rings, they can be extended to the graded case with minor changes in the proofs (cf. \cite[8.3.7]{Avramov6Lectures}, see also \cite{AvramovConcaIyengar}).

Let $A= \oplus_{i\gs 0}A_i$ be a DG algebra
such that $A_0$ is a Noetherian standard graded algebra over a field.
Assume that $A$ is bi-graded with internal and homological degree;
we will denote homological degree by $|\cdot|$ and internal degree by $\deg(\cdot)$.
For the differential we have $|\partial|=-1$ and $\deg(\partial)=0$.
It follows from the Leibniz rule for $\partial$ that
the cycles form a bi-graded subalgebra $Z(A)$ and 
the boundaries form a bi-graded two-sided ideal $B(A)\subseteq Z(A)$, 
and therefore the homology $H(A)=Z(A)/B(A)$ also has a bi-graded algebra structure.
If $\zeta \in Z_i(A)_j$ is a bi-homogeneous cycle,
let $y$ be a new variable of bi-degree $(i+1,j)$ and
denote by $A\langle y\rangle$ the unique, up to isomorphism, DG algebra extension of $A$ such that the differential satisfies $\partial(y)=\zeta$, 
where $y$ is an exterior variable if $i$ is even and a  divided power variable if $i$ is odd (cf. \cite[6.1.1]{Avramov6Lectures}).
For any proper bi-homogeneous ideal $J\subseteq A_0$ we can construct a semi-free DG algebra resolution $A\langle Y \rangle $ of the factor ring $A_0/J$,
where $Y = \bigcup_{i\gs 1 } Y_i$ and  $Y_i$ is a finite set of bi-homogeneous variables of homological degree $i$ such that $\partial(Y_1)$ minimally generates $J$ modulo $\partial(A_1)$ and the homology classes 
of each $\partial(Y_{n+1})$ minimally generate the $A_0$-module $H_n(A\langle Y_{\ls n} \rangle)$.
Here $Y_{\ls n}$ is short for $\bigcup_{i\ls n }Y_i$.
A DG algebra $A\langle Y \rangle$ obtained in this way is called an {\bf acyclic closure} of $A_0/J$ over the DG algebra $A$ and the variables $Y$ are called {\bf $\boldsymbol{\Gamma}$-variables}.
Notice that  the $i$-th homology of $A\langle Y_{\ls n} \rangle$
vanishes for $0<i<n$.

A theorem proved independently by Gulliksen and Schoeller (cf. \cite{Gulliksen}, \cite{Schoeller}) states that an acyclic closure $R\langle Y \rangle$ of $k$ over the DG algebra $A=A_0=R$ is in fact a minimal resolution of $k$ over $R$,
from which one can easily deduce that the $i$-th deviation is equal to the number of $\Gamma$-variables in homological degree $i$:
$$
\varepsilon_{i}(R)= \Card(Y_{i}).
$$
It follows  in particular that $\ee_i(R)\gs 0$. 
Note that $R\langle Y_1\rangle$ is the Koszul complex $K^R$ of $R$ with respect to $\fm $.
Basic homological properties of $R$ can be characterized in terms of the vanishing of deviations, 
as illustrated in the following discussion,
which explains
the use of the word ``deviation'' (cf. \cite[7.3]{Avramov6Lectures}).

\begin{remark}\label{Remark about complexity}
The first deviation $\ee_{1}(R)$ is equal to the embedding dimension of $R$, 
thus 
the following conditions are equivalent:
(i) $R$ is a field,
(ii) $\ee_{1}(R)=0$,
(iii) $\ee_{i}(R)=0$ for every $i \gs 1$.
The exactness of the Koszul complex $K^R$ gives the following equivalent conditions:
(i) $R$ is regular,
(ii) $\ee_{2}(R)=0$,
(iii) $\ee_{i}(R)=0$ for every $i \gs 2$.
Finally, theorems of Assmus \cite{Assmus}, Halperin \cite{Halperin} and Tate \cite{Tate} give the following equivalent conditions:
(i) $R$ is a complete intersection,
(ii) $\ee_{3}(R)=0$,
(iii) $\ee_{i}(R)=0$ for every $i \gs 3$,
(iv) $\ee_{i}(R)=0$ for some $i\gs 3$.
\end{remark}

Following the notation in the construction of acyclic closures, denote by $\mathfrak{p}$ the homogeneous maximal ideal of $A_0$. 
Given $J\subseteq \mathfrak{p}^2$ a homogeneous ideal and starting from $A_0$, 
we can build another bi-graded DG algebra free resolution $A_0[X]$ of $A_0/J$ over $A_0$ by following the same steps above, but adjoining polynomial variables (instead of divided powers) to kill cycles in odd degrees.  
A DG algebra resolution obtained in this way is called a {\bf minimal model} of $A_0/J$ over $A_0$. 
If we do not require  $\partial(X_{n+1})$ to  generate $H_n(A_0[ X_{\ls n} ])$ minimally for each $n \gs 1$,
then the resolution obtained is simply called a {\bf model} of $A_0/J$ over $A_0$. 
Analogously to minimal free resolutions,
an equivalent condition for a model to be minimal is that 
$\partial(X_1)\subseteq \mathfrak{p}^2$ and
$\partial(X_{n+1})\subseteq \mathfrak{p}X_n+ \sum_{i=1}^{n-1} X_i (A_0[X])_{n-i}$ for $n\gs 1$ (cf. \cite[7.2.2]{Avramov6Lectures}). 
Minimal models always exist and are unique up to isomorphism (cf. \cite[7.2.4]{Avramov6Lectures}). 
If $J$ is a complete intersection  or if $A_0$ is of equal characteristic zero, 
then $A_0\langle Y\rangle$ and $A_0[X]$ are isomorphic as DG algebras. However they differ in general  (cf. \cite[6.1.10]{Avramov6Lectures}). 

The following theorem due to Avramov shows that minimal models also carry information about deviations \cite[7.2.5, 7.2.6]{Avramov6Lectures}. 
We include here the statement in the standard graded case:

\begin{thm}[Avramov]\label{Avramov}
Let $S=k[T_1,\ldots,T_n]$ and $\fn=(T_1,\ldots,T_n)$. 
Let $I$ be a homogeneous ideal such that $I\subseteq \fn^2$ and $R=S/I$. 
Let $S[X]$ be a model of $R$ over $S$. Then $\Card(X_{i})\gs \varepsilon_{i+1}(R)$ for every $i\gs 1$.
Furthermore, equality occurs for every $i$ if and only if $S[X]$ is a minimal model of $R$ over $S$.
\end{thm}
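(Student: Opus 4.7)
\textbf{The plan} is to prove both assertions together by first establishing the sharp equality $\Card(X_i) = \varepsilon_{i+1}(R)$ for a minimal model, and then deducing the general bound via a cancellation argument. The key tool is the Gulliksen--Schoeller theorem recalled above, which identifies $\varepsilon_i(R)$ with $\Card(Y_i)$ in any acyclic closure $R\langle Y\rangle$ of $k$ over $R$; I also invoke the existence and uniqueness of minimal models together with the explicit minimality criterion $\partial(X_1)\subseteq \fn^2$ and $\partial(X_{i+1})\subseteq \fn X_i + \sum_{j=1}^{i-1} X_j (S[X])_{i-j}$ stated in the preliminaries.

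\textbf{Step 1: minimal case.} Starting from a minimal model $S[X^{\min}]$, I would build an acyclic closure $R\langle Y\rangle$ of $k$ over $R$ whose $\Gamma$-variables in homological degree $\gs 2$ are in bijection with the variables of $X^{\min}$. Set $Y_1 = \{y_1, \dots, y_n\}$ with $\partial(y_j) = T_j$, so $R\langle Y_1\rangle = K^R$. Proceed inductively: assuming $Y_{\ls i}$ has been constructed together with a lift $\varphi_{i-1}:S[X^{\min}]_{\ls i-1}\to R\langle Y_{\ls i}\rangle$ of the augmentation $S[X^{\min}]\to R$, adjoin to $Y_{i+1}$ one $\Gamma$-variable $y_x$ (exterior or divided-power according to the parity of $i+1$) for each $x \in X^{\min}_i$, with $\partial(y_x)$ a cycle representative of the class $[\varphi_{i-1}(\partial x)]$ in $R\langle Y_{\ls i}\rangle$. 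The crux is that the minimality hypothesis $\partial(X^{\min}_i) \subseteq \fn X^{\min}_{i-1} + \sum_j X^{\min}_j (S[X^{\min}])_{i-1-j}$ translates, after reducing modulo $\fn$, into the statement that these classes form a \emph{minimal} generating set of $H_i(R\langle Y_{\ls i}\rangle)$. Hence the construction produces a bona fide acyclic closure, and Gulliksen--Schoeller yields $\varepsilon_{i+1}(R) = \Card(Y_{i+1}) = \Card(X^{\min}_i)$.

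\textbf{Step 2: cancellation for general models.} Let $S[X]$ be any (possibly non-minimal) model. If the minimality criterion fails in some homological degree, then there exist $x \in X_{i+1}$, $x'\in X_i$, and $c \in k\setminus\{0\}$ such that $\partial(x) = c x' + r$ with $r \in \fn X_i + \sum_j X_j(S[X])_{i-j}$. A standard change of variables replaces $x$ by $\tilde{x}$ with $\partial(\tilde{x}) = x'$; the sub-DG algebra generated by $\{\tilde{x}, x'\}$ is then acyclic and splits off as a tensor factor, so deleting both yields a smaller model $S[X']$ with $\Card(X'_j) = \Card(X_j)$ for $j \notin \{i, i+1\}$ and $\Card(X'_j) = \Card(X_j) - 1$ for $j \in \{i, i+1\}$. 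Iterating converges to a minimal model, so $\Card(X_i) \gs \Card(X^{\min}_i) = \varepsilon_{i+1}(R)$; equality for every $i$ forces no cancellation to have been available, i.e., $S[X]$ was already minimal.

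\textbf{Main obstacle.} The hard part is the minimality claim in Step 1: verifying that the classes $[\varphi_{i-1}(\partial x)]$ form a \emph{minimal} generating set of $H_i(R\langle Y_{\ls i}\rangle)$, not merely some generating set. One is changing base rings from $S$ to $R$ and must simultaneously track both the bigrading and the exterior versus divided-power structure of the $\Gamma$-variables. The crucial input is the model-minimality condition $\partial(x)\in \fn X + X\cdot X$: it guarantees that reduction modulo $\fn$ annihilates precisely the decomposable part of each $\partial x$, leaving the images $[\varphi_{i-1}(\partial x)]$ linearly independent in the relevant homology module. Once this is in place, Step 2 is a purely formal cancellation argument and the theorem follows.
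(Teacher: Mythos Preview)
The paper does not prove this theorem; it is stated as a known result of Avramov, with the citation \cite[7.2.5, 7.2.6]{Avramov6Lectures}, and no argument is given in the paper itself. There is therefore nothing in the paper to compare your proposal against.

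As a standalone outline your two-step plan is reasonable: establish the equality $\Card(X_i^{\min})=\varepsilon_{i+1}(R)$ for a minimal model, then reduce an arbitrary model to a minimal one by successively cancelling a pair $\{x,x'\}$ whenever $\partial x$ has a unit coefficient on some $x'$. The obstacle you single out in Step~1 is genuine and is where the substantive work in Avramov's reference resides; your sketch of transporting $\partial x$ along a lift $\varphi_{i-1}$ and checking that the resulting homology classes are a \emph{minimal} generating set is the right shape, but making that inductive comparison of $H_i(S[X^{\min}_{\ls i}])$ and $H_i(R\langle Y_{\ls i}\rangle)$ precise is nontrivial and is not something the present paper attempts.
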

Hence, it is possible to compute $\ee_i(R)$ by computing a minimal model of $R$ over $S$. Some applications of this fact and a finer analysis of the sequence of deviations for edge ideals of special graphs are given in \cite{BDGMS}.

Finally, a third context where deviations arise naturally is that of homotopy Lie algebras.
The {\bf homotopy Lie algebra} of $R$ is the graded Lie algebra over $k$ 
$$
\pi(R) = H (\Der^\gamma_R(R\langle Y \rangle, R\langle Y\rangle))
$$
where $R\langle Y\rangle$ is an acyclic closure of $k$ over $R$ and $\Der^\gamma_R(R\langle Y \rangle, R\langle Y\rangle)$ denotes the DG module of $R$-linear $\Gamma$-derivations (cf.\cite[6.2.2]{Avramov6Lectures}).
Its universal enveloping algebra is $\Ext_R(k,k)$ and the dimension of the graded component
$\pi^i(R)$ is $\ee_i(R)$.
A result  due to Avramov and L\"ofwall  states that 
$R$ is Golod if and only if $\pi^{\gs 2}(R)$ is
the free graded Lie algebra generated  by the  vector space $\Hom_k(\Sigma H^R_{\gs 1},k)$,
where $H^R$ denotes the homology of the Koszul complex of $R$.
We refer to \cite[Chapter 10]{Avramov6Lectures} for more details on the subject.

\section{Extremal deviations}\label{Extremal}

The goal of this section is to study extremal deviations among ideals with a given Hilbert series.  
The first main result of this section is Theorem \ref{degenerations have bigger deviations}, where we show that the deviations of $S/I$ are at most equal to those of $S/\initial_{<}(I)$ for any term order $<$. 
The second main result is Theorem \ref{Main Theorem on Lex}, where we show that among all rings $S/I$ with the same Hilbert series, the ring $S/L$ has the largest deviations, where $L$ denotes the lex-segment ideal of $S/I$.  
We modify a standard deformation argument and apply it to minimal models.

A non-negative integer function $\omega$ on the set $\{T_1,\ldots,T_n\}$ is called a {\bf weight} for $S$;
we can extend it to arbitrary monomials in $S$ by
$\omega(T_1^{v_1}\cdots T_n^{v_n})=v_1\omega(T_1)+\cdots + v_n\omega(T_n) $. 
Given a polynomial $f\in S$, we denote the highest weight of a monomial in the support of $f$ by $\omega(f)$ and the sum of the terms of $f$ with weight equal to $\omega(f)$ by $\initial_{\omega} (f)$.
 Let $t$ be a new variable and
 define the ideal

$$
I_\omega=\Big(\, t^{\omega(f)}f\big(t^{-w(T_1)}T_1,\,\ldots,\, t^{-w(T_n)}T_n\big)\,|\, f\in I \Big)\subseteq S[t].
$$

Notice that upon setting $t=0$ in $I_\omega$ we obtain $\initial_\omega I:=(\initial_\omega(f)\,|\, f\in I)$ whereas upon setting $t=1$ we obtain $I$. 
If $<$ is a  term order, there is always a weight $\omega$ so that $\initial_\omega I= \initial_< I$ (cf. \cite[3.1.2]{HerzogHibi}). 

\begin{thm}\label{degenerations have bigger deviations}
Let $S=k[T_1,\ldots,T_n]$, $\fn=(T_1,\ldots,T_n)$, and $\omega$ be a weight for $S$.
  If $I\subseteq \fn^2$ is a homogeneous ideal, 
  then  we have
$\varepsilon_i (S/I) \ls \varepsilon_i (S/\initial_\omega I)$ for every $i\gs 1$.
In particular, if $<$ is a term order on $S$ then 
$\varepsilon_i (S/I) \ls \varepsilon_i (S/\initial_< I)$ for every $i\gs 1$.
\end{thm}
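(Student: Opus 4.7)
The plan is to prove the inequality by a deformation argument applied to DG models, combined with Theorem~\ref{Avramov}. Set $\widetilde{S}=S[t]$ and $\widetilde{R}=\widetilde{S}/I_\omega$. The key property of the weight deformation is that $\widetilde{R}$ is a flat $k[t]$-module with $\widetilde{R}/t\widetilde{R}\cong S/\initial_\omega I$ and $\widetilde{R}/(t-1)\widetilde{R}\cong S/I$; in particular, both $t$ and $t-1$ are non-zero-divisors on $\widetilde{R}$.

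Let $S[X]$ be a minimal model of $S/\initial_\omega I$ over $S$. The central step is to construct, by induction on the homological degree $n$, a model $\widetilde{S}[\widetilde{X}]$ of $\widetilde{R}$ over $\widetilde{S}$ with $\Card(\widetilde{X}_i)=\Card(X_i)$ for every $i$ and with the property that its specialization at $t=0$ recovers $S[X]$. For $n=1$, each minimal generator $g=\initial_\omega(f)$ of $\initial_\omega I$ has the canonical lift $t^{\omega(f)}f(t^{-\omega(T_1)}T_1,\ldots,t^{-\omega(T_n)}T_n)\in I_\omega$, and graded Nakayama---in a weighted grading such as $\deg(T_i)=\omega(T_i)+1$, $\deg(t)=1$, combined with the flatness identity $(t)\cap I_\omega=t\,I_\omega$---shows that these lifts generate $I_\omega$. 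For the inductive step, given $\widetilde{S}[\widetilde{X}_{\ls n}]$ with $H_i=0$ for $1\ls i<n$, I would lift each cycle $\partial(x)$ ($x\in X_{n+1}$) to a cycle $\widetilde{z}\in Z_n(\widetilde{S}[\widetilde{X}_{\ls n}])$ specializing to $\partial(x)$: any chain-level lift $\widetilde{z}_0$ has $\partial(\widetilde{z}_0)=tw$ for some cycle $w$ in homological degree $n-1$, which is a boundary $\partial(u)$ by the inductive vanishing, so $\widetilde{z}:=\widetilde{z}_0-tu$ does the job.

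To finish the inductive step, the short exact sequence of complexes
\begin{equation*}
0\longrightarrow \widetilde{S}[\widetilde{X}_{\ls n}]\xrightarrow{\,t\,}\widetilde{S}[\widetilde{X}_{\ls n}]\longrightarrow S[X_{\ls n}]\longrightarrow 0
\end{equation*}
induces (using $H_{n-1}=0$) a surjection $H_n(\widetilde{S}[\widetilde{X}_{\ls n}])\twoheadrightarrow H_n(S[X_{\ls n}])$ with kernel $t\,H_n(\widetilde{S}[\widetilde{X}_{\ls n}])$. Graded Nakayama then guarantees that the lifted cycles $\widetilde{z}$ generate $H_n(\widetilde{S}[\widetilde{X}_{\ls n}])$ as an $\widetilde{S}$-module, so adjoining them as $\widetilde{X}_{n+1}$ extends the model. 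Specializing the resulting $\widetilde{S}[\widetilde{X}]$ at $t=1$ yields a DG algebra $S[X']$ over $S$; since $t-1$ is a non-zero-divisor on $\widetilde{R}$, the specialization computes $\Tor_*^{\widetilde{S}}(\widetilde{R},\widetilde{S}/(t-1))$, which vanishes in positive homological degrees, so $S[X']$ is a (possibly non-minimal) model of $S/I$ over $S$ with $\Card(X_i')=\Card(X_i)=\ee_{i+1}(S/\initial_\omega I)$. Theorem~\ref{Avramov} applied to $S[X']$ then gives $\ee_{i+1}(S/I)\ls\Card(X_i')=\ee_{i+1}(S/\initial_\omega I)$ for every $i\gs 1$; the case $i=0$ is immediate since $\ee_1=\edim=n$ for both rings because $I,\initial_\omega I\subseteq\fn^2$.

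The main technical obstacle is the inductive construction: one must produce cycle lifts of $\partial(X_{n+1})$ and then show that these lifts generate the full $\widetilde{S}$-module $H_n(\widetilde{S}[\widetilde{X}_{\ls n}])$, not merely its specialization modulo $t$. The key ingredients making the argument go through are the flatness of $\widetilde{R}$ over $k[t]$, which yields the long exact sequence associated to multiplication by $t$, and graded Nakayama applied in a weighted grading where $t$ has positive degree.
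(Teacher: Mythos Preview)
Your argument is correct and ultimately rests on the same application of Theorem~\ref{Avramov}, but the paper takes a considerably shorter route. Rather than starting from a minimal model $S[X]$ of $S/\initial_\omega I$ over $S$ and lifting it step by step to a model $\widetilde S[\widetilde X]$ over $\widetilde S=S[t]$, the paper begins directly with a minimal model $S[t][X]$ of $S[t]/I_\omega$ over $S[t]$, graded by the weight $\omega'$ extending $\omega$ with $\omega'(t)=1$. Since $t$ and $t-1$ are regular on $S[t]/I_\omega$, the two specializations $S[t][X]\otimes_{S[t]}S[t]/(t)$ and $S[t][X]\otimes_{S[t]}S[t]/(t-1)$ are automatically models of $S/\initial_\omega I$ and $S/I$ over $S$. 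The point is then simply that the minimality conditions $\partial(X_1)\subseteq(\mathfrak n,t)^2$ and $\partial(X_{n+1})\subseteq(\mathfrak n,t)X_n+\sum_i X_i(S[t][X])_{n-i}$ descend modulo $t$ to the minimality conditions over $S$, so the specialization at $t=0$ is a \emph{minimal} model; Theorem~\ref{Avramov} gives $\varepsilon_i(S/\initial_\omega I)=\Card(X_{i-1})\gs\varepsilon_i(S/I)$ at once.

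Your inductive lifting, together with the long exact sequence in homology and graded Nakayama, in effect reproves that a model over $S[t]$ exists with exactly as many variables as a minimal model at the special fiber. The paper sidesteps this by invoking the abstract existence of minimal models over $S[t]$ and observing that minimality is preserved under $t\mapsto 0$. Your approach is more constructive and makes the role of flatness very explicit; the paper's is shorter and avoids the case analysis (note, for instance, that in your inductive step the case $n=1$ requires a slightly different justification for why the degree-$0$ element $w$ lies in $I_\omega$, namely $tw\in I_\omega$ together with $t$ regular on $\widetilde R$, rather than ``inductive vanishing''). Both arguments ultimately hinge on the same flatness of the family and the characterization of minimal models from \cite[7.2.2]{Avramov6Lectures}.
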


\begin{proof}
The result holds for $i =1$ as $\varepsilon_1(S/I)=n=\varepsilon_1 (S/\initial_\omega I)$.  
We  extend  $\omega$ to a weight $\omega'$ for $S[t]$ by setting $\omega'(T_i)=\omega(T_i)$ and $\omega'(t)=1$;
then  $I_\omega$ is a  homogeneous ideal of $S[t]$ under the grading induced by $\omega'$. 
 Let $S[t][X]$ be a minimal model of $S[t]/I_\omega$ over $S[t]$. 
Since  $t$ and $t-1$ are regular on $S[t]/I_\omega$ by \cite[3.2.5]{HerzogHibi}, 
it follows from \cite[3.2.6]{HerzogHibi} that $S[t][X]\otimes_{S[t]} S[t]/(t)$ and $S[t][X]\otimes_{S[t]} S[t]/(t-1)$ are free $S$-resolutions of $S/\initial_\omega I$ and $S/I$ respectively, 
and hence models over $S$, as they inherit the DG algebra structure. 

Since $S$ is positively graded by the grading induced by $\omega$, 
a direct adaptation of the proof of \cite[7.2.2]{Avramov6Lectures} allows us to conclude that
$$
\partial(X_1)\subseteq (\fn ,t)^2, \qquad
\partial(X_{n+1})\subseteq (\fn ,t)X_n+ \sum_{i=1}^{n-1} X_{i}\left(S[t][X]\right)_{n-i}
\quad\mbox{if }
n\gs 1
$$
hence the model $S[t][X]\otimes_{S[t]} S[t]/(t)$ is  minimal as a model of $S/\initial_{\omega} I$ over $S$.  
By Theorem \ref{Avramov}, we conclude that for every $i\gs 2$
$$\varepsilon_i(S/\initial_\omega I) = \Card(X_{i-1}) \gs \varepsilon_i(S/I).$$
\end{proof}

Serre showed that the following coefficientwise inequality of formal power series holds:
$$
P^R_k (z)  \preceq \frac{(1+z)^n}{1 - \sum_{i = 1}^n \beta_i^S(R)z^{i+1}}.
$$
The ring $R$ is called {\bf Golod} if the  equality is achieved;  
 we refer the reader to \cite[Chapter 5]{Avramov6Lectures} for a thorough treatment of Golod rings. 
If $I, J\subseteq (T_1,\ldots, T_n)^2$ are two homogeneous ideals such that $S/J$ is Golod and $\beta_i^S(S/I) \ls \beta_i^S(S/J)$ for all $i$, the inequality $P^{S/I}_k(z) \preceq P^{S/J}_k (z)$ holds (cf. \cite[proof of 1.3]{Peeva0Borel}). In the next proposition we extend this result to the sequence of deviations.  

\begin{prop}\label{cor Golod increase}
Let $S=k[T_1,\ldots,T_n]$ and $I, J\subseteq S$ be two homogeneous ideals.
If $S/I$ and $S/J$ are  Golod rings such that $\beta_i^S(S/I) \ls \beta_i^S(S/J)$ for all $i$, then
$\varepsilon_i(S/I) \ls \varepsilon_i(S/J)$
for every $i\gs 2$.
\end{prop}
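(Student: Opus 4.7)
The plan is to exploit the Avramov--L\"ofwall description of the homotopy Lie algebra of a Golod ring recalled at the end of Section~\ref{Preliminaries}. Since $\varepsilon_i(R) = \dim_k \pi^i(R)$ for all $i$, the deviations of $R$ in degrees $\gs 2$ are precisely the graded dimensions of $\pi^{\gs 2}(R)$; when $R$ is Golod, the latter is the free graded Lie algebra $L(V_R)$ on the shifted dual $V_R := \Hom_k(\Sigma H^R_{\gs 1}, k)$ of the positive Koszul homology. So the strategy is: translate the Betti-number inequality into an inclusion of generating vector spaces, then invoke monotonicity of free Lie algebras.

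The first concrete step is to read off the internal grading of $V_R$ in terms of Betti numbers. Because $H^R = \Tor^S(R, k)$, the component $(V_R)_j$ has dimension $\beta^S_{j-1}(R)$ for $j \gs 2$ and vanishes otherwise. The hypothesis $\beta_i^S(S/I) \ls \beta_i^S(S/J)$ therefore yields degreewise dimension inequalities between $V_{S/I}$ and $V_{S/J}$; choosing a splitting in each internal degree produces a homogeneous $k$-linear injection $\iota \colon V_{S/I} \hookrightarrow V_{S/J}$.

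What remains is the assertion that the graded dimensions of the free graded Lie algebra $L(V)$ are monotone in $V$ --- this is the main technical point, though it is a standard consequence of functoriality combined with the Poincar\'e--Birkhoff--Witt theorem. Concretely, $\iota$ induces a map of graded Lie algebras $L(\iota) \colon L(V_{S/I}) \to L(V_{S/J})$ whose universal envelope is the obvious inclusion of tensor algebras $T(V_{S/I}) \hookrightarrow T(V_{S/J})$; PBW then forces $L(\iota)$ itself to be injective in every internal degree. Combining this with the Avramov--L\"ofwall identification applied to both $S/I$ and $S/J$ then yields
\begin{equation*}
    \varepsilon_i(S/I) \;=\; \dim_k L(V_{S/I})_i \;\ls\; \dim_k L(V_{S/J})_i \;=\; \varepsilon_i(S/J) \qquad \text{for every } i \gs 2,
\end{equation*}
as desired. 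The only point requiring even a moment of care is checking that the splittings can be chosen compatibly with the internal grading, but since both $V_{S/I}$ and $V_{S/J}$ are bounded below and finite-dimensional in each internal degree, this is automatic.
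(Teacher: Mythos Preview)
Your argument is correct and follows essentially the same route as the paper: both invoke the Avramov--L\"ofwall identification of $\pi^{\gs 2}(R)$ with the free graded Lie algebra on $\Hom_k(\Sigma H^R_{\gs 1},k)$, together with $\beta_i^S(R)=\dim_k H^R_i$, to reduce the deviation inequality to monotonicity of free graded Lie algebras in the generating space. The paper simply asserts that ``the desired inequality follows'' from these facts, whereas you have made explicit the mechanism (a degreewise injection of generating spaces, functoriality of $L(-)$, and PBW to certify injectivity of $L(\iota)$); your version is thus a fleshed-out form of the same proof rather than a different one.
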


\begin{proof}
Let $R$ be a Golod ring, $H^R$ the homology of the Koszul complex of $R$, and $\pi(R)$ the homotopy Lie algebra of $R$.
Then $\pi^{\gs 2}(R)$ is the free graded Lie algebra generated by the  vector space $\Hom_k(\Sigma H^R_{\gs 1},k)$,
and 
 $\varepsilon_i(R) = \dim_k \pi^i(R)$ for every $i$ (cf. Section \ref{Preliminaries}).
Since $\beta_i^S(R)=\dim_k H^R_i$ for every $i$,  the desired inequality follows.
\end{proof}

\begin{remark} If we remove the assumption that $S/I$ and $S/J$ are Golod rings then the result is not true.  Indeed, one can take $I = (T_1^2,T_1 T_2)$ and $J = (T_1^2,T_2^2)$.  While these ideals share the same Betti numbers over $S$, $\ee_3(S/I) = 1$ and $\ee_3(S/J) = 0$.
\end{remark}

Let $<$ be a term order on $S$.
It is well-known that there exists a non-empty Zariski open subset $\mathcal{U} \subseteq \Gl_n(k)$ such that the ideal $\initial_<(g\cdot I)$ is the same for every $g\in \mathcal{U}$, 
where $g\cdot I$ denotes the image of $I$ under the change of coordinates in $S$ defined by $g$ (cf. \cite[4.1.2]{HerzogHibi}). 
This  ideal is called the  {\bf generic initial ideal} of $I$ with respect to $<$ and is denoted by $\gin_< I$.
An important property of $\gin_<I$ is that it is fixed by the action of the Borel subgroup of $\Gl_n(k)$;
when $\ch \, k =0$ this means that $\gin_<I$ is strongly stable (cf. \cite[4.2.1, 4.2.6]{HerzogHibi}), 
thus $S/\gin_<I$ is a Golod ring by \cite[Theorem 4]{HerzogReinerWelker}.

Let $\HF_I$ denote the Hilbert function of  the ideal  $I$. 
The {\bf lex-segment ideal} of $I$ is 
 the vector space spanned by the lexicographically first $\HF_I(d)$ monomials of $S$ in each degree $d$.
 It was shown in \cite{Bigatti}, \cite{Hulett}, \cite{Pardue} that for any ideal $J$ with $\HF_J = \HF_L$  and every $i \gs 0$ we have  
 \begin{equation}\label{BHPInequality}
\beta_{i}(S/J) \ls \beta_{i}(S/L).
\end{equation}
Note that the ideal  $L$ is strongly stable, and therefore $S/L$ is a Golod ring.

We present now the second main result of this section. 
We provide two proofs;
although the second one only works in characteristic zero,  
we present both  in order to show an application of Proposition \ref{cor Golod increase}.

\begin{thm}\label{Main Theorem on Lex}
Let $S=k[T_1,\ldots,T_n]$ and $\fn=(T_1,\ldots,T_n)$.
Let   $I\subseteq \fn^2$ be a homogeneous ideal and $L$ be the lex-segment ideal of $I$. Then, for every $i\gs 1$,
$$\varepsilon_i (S/I) \ls \varepsilon_i (S/L).$$
\end{thm}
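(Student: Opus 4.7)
The plan is to combine Theorem \ref{degenerations have bigger deviations} with the two-step reduction strategy that underlies the Bigatti--Hulett--Pardue inequality \eqref{BHPInequality}: first pass from $I$ to its generic initial ideal with respect to the lexicographic order, then deform the generic initial ideal to the lex-segment $L$.

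For the first step, let $<$ denote the lexicographic term order and let $g \in \Gl_n(k)$ be a generic change of coordinates so that $\initial_<(g \cdot I) = \gin_< I$. Since $g$ acts on $S$ by an isomorphism, $S/I$ and $S/(g \cdot I)$ have identical deviations, and Theorem \ref{degenerations have bigger deviations} yields
$$\varepsilon_i(S/I) = \varepsilon_i(S/(g \cdot I)) \ls \varepsilon_i(S/\gin_< I) \quad \text{for all } i \gs 1.$$
Note that $\gin_< I$ is Borel-fixed and has the same Hilbert function as $I$ and $L$. It therefore suffices to show $\varepsilon_i(S/\gin_< I) \ls \varepsilon_i(S/L)$.

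In characteristic zero this second step is immediate from the tools already set up in the paper: $\gin_< I$ is strongly stable, so by \cite{HerzogReinerWelker} both $S/\gin_< I$ and $S/L$ are Golod, and they have the same Hilbert function. The Bigatti--Hulett--Pardue inequality \eqref{BHPInequality} gives $\beta_i^S(S/\gin_< I) \ls \beta_i^S(S/L)$ for every $i$, and Proposition \ref{cor Golod increase} converts this into the desired inequality on deviations. In arbitrary characteristic the Golod route is unavailable, because $\gin_< I$ is only Borel-fixed and need not be strongly stable. The plan is instead to mimic Pardue's combinatorial argument: one exhibits a finite chain of ideals
$$\gin_< I = J_0 \rightsquigarrow J_1 \rightsquigarrow \cdots \rightsquigarrow J_r = L,$$
where each arrow $J_{j-1}\rightsquigarrow J_j$ is realized as $J_j = \initial_{\omega_j} J_{j-1}$ for an appropriate weight $\omega_j$ on $S$, and every $J_j$ shares the Hilbert function of $I$. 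Iterating Theorem \ref{degenerations have bigger deviations} along this chain then yields $\varepsilon_i(S/\gin_< I) \ls \varepsilon_i(S/L)$ for every $i\gs 1$, and the result follows.

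The main obstacle is the arbitrary-characteristic case: constructing Pardue's deformation chain using only weight degenerations (as opposed to more general flat families) and checking that at each intermediate step we can legitimately apply the minimal-model machinery behind Theorem \ref{degenerations have bigger deviations}. Pardue's original moves were designed to control Betti numbers of minimal free resolutions, and what must be verified here is that the same weights preserve the minimality condition $\partial(X_1) \subseteq (\fn,t)^2$, $\partial(X_{n+1}) \subseteq (\fn,t)X_n + \sum_{i=1}^{n-1} X_i \bigl(S[t][X]\bigr)_{n-i}$ used in the proof of Theorem \ref{degenerations have bigger deviations}. Once this compatibility is in place, the telescoping argument through the chain of weight degenerations completes the proof.
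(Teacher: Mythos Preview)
Your characteristic-zero argument is exactly the paper's second proof: pass to $\gin_< I$ via Theorem~\ref{degenerations have bigger deviations}, note that $S/\gin_< I$ and $S/L$ are Golod, and invoke \eqref{BHPInequality} together with Proposition~\ref{cor Golod increase}.

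The gap is in your treatment of arbitrary characteristic. You assert that Pardue's chain from a Borel-fixed monomial ideal to $L$ can be realized purely as a sequence of weight degenerations $J_j=\initial_{\omega_j}J_{j-1}$ inside the fixed ring $S$, so that Theorem~\ref{degenerations have bigger deviations} applies at every step. This is not what Pardue does, and there is no such chain in general. Pardue's essential move is \emph{polarization}: one passes from a monomial ideal $J\subseteq S$ to its polarization $J^{\mathrm{pol}}$ in a strictly larger polynomial ring, performs a generic change of coordinates and an initial-ideal degeneration there, and then cuts back down to $S$ by a regular sequence of linear forms. Neither the enlargement of the ambient ring nor the subsequent quotient by linear forms is of the shape $J\mapsto\initial_\omega J$, so Theorem~\ref{degenerations have bigger deviations} alone cannot cover these moves. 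You flag this as ``the main obstacle'' but then simply assume it away.

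The paper's first proof handles this step with an additional ingredient you are missing: for the polarization/specialization move one invokes \cite[7.1.6]{Avramov6Lectures}, which says that factoring out a regular sequence of linear forms does not decrease $\varepsilon_i$ for $i\gs 2$ (and conversely, since $J$ and $J^{\mathrm{pol}}$ are related by such a regular sequence, the deviations match in that range). Thus Pardue's three kinds of moves---generic coordinate changes, initial ideals, and polarization followed by cutting down---are controlled respectively by invariance under isomorphism, Theorem~\ref{degenerations have bigger deviations}, and \cite[7.1.6]{Avramov6Lectures}. Your proposal needs this third tool; without it the arbitrary-characteristic argument does not go through.
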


\begin{proof}[Proof 1.]

We follow the construction of Pardue in \cite{Pardue};
 our approach is  similar  to the one in the proof of the main result in \cite{PeevaConsec}. 
 Pardue's proof shows that any ideal in the Hilbert scheme is connected to the lex-segment ideal by a sequence of deformations and degenerations.  
 It suffices to show that at each step the deviations do not decrease. 
 The result is clear for $i=1$, so let $i\gs 2$.

\begin{enumerate}
\item {\it Generic changes of coordinates}. Since deviations depend only on the isomorphism class of $R$, generic changes of coordinates preserve deviations. 

\item {\it Passing to an initial ideal}. This follows by Theorem \ref{degenerations have bigger deviations}.

\item {\it Polarization and then factoring out generic hyperplane sections}.  
It is well known that an ideal $I$ and its polarization  are related via a regular sequence of linear forms.  Finally, the generic hyperplane sections that Pardue employs are always a regular sequence, 
so  by \cite[7.1.6]{Avramov6Lectures} the deviations do not decrease from the second one on.
\end{enumerate}
\end{proof}

\begin{proof}[Proof 2 ($\ch\, k = 0$).]
We may assume that $i \gs 1$.
Let $g$ be a generic change of coordinates. By Proposition \ref{degenerations have bigger deviations}, we have for all $i$
$$\varepsilon_i(S/I) = \varepsilon_i(S/(g\cdot I)) \ls \varepsilon_i(S/\initial_< (g \cdot I)) = \varepsilon_i(S/\gin_< I).$$
Since $S/(\gin_<I)$ and $S/L$ are Golod rings and $\HF_{\gin_<I}=\HF_I =\HF_L$, the conclusion follows from   \eqref{BHPInequality} and Proposition \ref{cor Golod increase}.
\end{proof}

\begin{remark}
We deduce  from  \eqref{PoincareDeviations} that $\beta_i^{S/I}(k)$ can be written as a function of the deviations $\varepsilon_i(S/I)$
using only sums, products, and binomial coefficients, see also \cite[7.1]{Avramov6Lectures}.
Thus, a pointwise inequality for the deviations implies a pointwise inequality for the Betti numbers of $k$.
In this way from Theorem  \ref{degenerations have bigger deviations} we recover  the well-known fact that the Poincar\'e series of the residue field can only grow larger when passing to an initial ideal:
$$P^{S/I}_k(z) \preceq P^{S/\initial_\omega I}_k (z)$$
\noindent
whereas from Theorem \ref{Main Theorem on Lex} it follows that considering the lex-segment ideal gives the largest Poincar\'e series among all ideals of $S$ with the same Hilbert series, 
a fact originally proved in \cite{Peeva0Borel}:
$$P^{S/I}_k(z) \preceq P^{S/L}_k (z).$$
\noindent
However, the next example shows that a pointwise inequality for the Betti numbers of $k$ does not necessarily imply a pointwise inequality for the deviations.
\end{remark}
\begin{example}
Let $S = k [ T_1, \ldots, T_5]$. 
Denote by $R_6$ and $R_8$ the quotients of $S$ by 6 and 8 generic quadrics, respectively; 
we determine their Poincar\'e series following \cite{Roos}.
The two rings\textbf{•} have the property $\mathcal{L}_3$ (cf. \cite[pp. 480, 482]{Roos}), and
thus their Poincar\'e series satisfy the equation
\begin{equation}\label{genKoszul}
\frac{1}{P^{R}_k(z) } = \frac{ 1 + \frac{1}{z}}{ \HS_A(z) } -  \frac{\HS_R(-z)}{z}
\end{equation}
where  $A$ is the subalgebra of the Yoneda algebra $\Ext_R(k,k)$ generated by $\Ext_R^1(k,k)$.
$A$ is
the enveloping algebra of a graded Lie algebra $\eta=\oplus_{i\gs1}\eta^i $ generated in degree 1. Therefore, its Hilbert series is given by
\begin{equation}\label{eqA1}
\HS_A (z) = \prod_{i=1}^\infty \frac{(1+z^{2i-1})^{e_{2i-1}}}{(1-z^{2i})^{e_{2i}}}
\end{equation}
where $e_i = \dim_\Bbbk \eta^i$ (notice that these are not the deviations of $R$).

The Hilbert series of the two rings are
$
\HS_{R_6}(z) = 1 +5z +9 z^2 +5 z^3
$
and
$
\HS_{R_8}(z) = 1+ 5z + 7z^2.
$
In order to determine the $e_i$, we proceed as in \cite[p. 487]{Roos}. 
We only show the computations for the case of 6 quadrics, the other case being analogous. 
The  series $\HS_A(z)$ is equal to the diagonal power series 
\begin{equation}\label{eqA2}
\sum_{i\gs 0} \beta_{ii}^{R_6}(\Bbbk)z^i = 1 + 5z + 16 z^2 + 40 z^3 + 86 z^4 + 166 z^5 + 296 z^6 + \cdots.
\end{equation}
Comparing \eqref{eqA1} and \eqref{eqA2} we find that $e_1 = 5$, so we divide  \eqref{eqA2} by $(1+z)^5$ 
\begin{equation}\label{eqA3}
\frac{\HS_A(z)}{(1 + z)^5}=1 + 6z^2 + 21 z^4 + 56 z^6 + \cdots
\end{equation}
from which we find that $e_2 = 6$. 
Now we multiply \eqref{eqA3} by $(1-z^2)^6$ and get
\begin{equation}
\frac{(1-z^2)^6\HS_A(z)}{(1 + z)^5} = 1 + 0z + 0z^2 + 0z^3 + \cdots.
\end{equation}
Hence $\eta^3 = 0$ and, since $\eta$ is generated in degree 1, we conclude that $\eta^i =0 $ for all $i \gs 3$.

From  \eqref{genKoszul} we obtain 
$$
P^{R_6}_k(z)= \frac{ 1 }{(z-1)(z^6-3 z^5+z^4+5 z^3-5 z^2+4 z-1)},
$$
and in the same way we obtain 
$$
P^{R_8}_k(z) = \frac{-(z^2-z+1)^5 }{d(z)},
$$
\begin{eqnarray*}
\textrm{where } d(z)&=& z^{18}-9 z^{17}+37 z^{16}-93 z^{15}+160 z^{14}-192 z^{13}+136 z^{12}+31 z^{11}\\
&-&270 z^{10}+ 505 z^9-664 z^8+710 z^7-636 z^6+479 z^5-294 z^4+140 z^3\\
&-&47 z^2+10 z-1.
\end{eqnarray*}
Using these expressions we compute the first few deviations and find 
$$
\varepsilon_4(R_6) = 16 > 9 = \varepsilon_4(R_8)
$$
whereas we verify that 
$
\beta^{R_6}_i(k) \ls \beta^{R_8}_i(k)$
for $i = 0, \ldots, 9$.
Finally, 
using partial fraction decompositions for $P^{R_6}_k(z)$ and $P^{R_8}_k(z)$
we obtain estimates for the Betti numbers, showing that $\beta^{R_6}_i(k) \ls \beta^{R_8}_i(k)$
for all $i \gs 10$, and thus
$$
\beta^{R_6}_i(k) \ls \beta^{R_8}_i(k)
\quad 
\mbox{for all }
i \gs 0.
$$ 
\end{example}

\section{Exponential Growth of Deviations}

The goal of this section is to prove that, when $R$ is not a complete intersection, deviations grow exponentially if $R$ is Golod (cf. Theorem \ref{goloddev})
or if $R$ is a Koszul algebra with the same Hilbert series as a ring presented by the edge ideal of a claw-free graph (cf. Theorem \ref{clawfreedev}).

We now briefly introduce Koszul algebras, referring to \cite{Conca} for a survey on the topic. 
The ring $R$ is a {\bf Koszul algebra} if the minimal free $R$-resolution of $k$ is linear, i.e., $\be{i}{j}^R(k)=0$ whenever $i\neq j$.
If $R$ is Koszul then $I$ is generated by quadrics;
however, these two conditions  are not equivalent. 
For instance, if  $I=(T_1^2,T_2^2,T_3^2, T_4^2, T_1T_2+T_3T_4)$, then $\be34^R(k)\neq 0$. 
By a classical theorem of Fr\"oberg, $R$ is Koszul if $I$ is a quadratic monomial ideal (cf. \cite{Froberg}). Denote the Hilbert series of $R$ by $\HS_R(\zi)=\sum_{i}\dim_k(R_{i})\zi^{i}$; then $R$ is Koszul if and only if the following identity occurs:
 
\begin{equation}\label{PoncareHilbert}
P^R_k(z)\HS_R(-z)=1.
\end{equation}

The following proposition provides a compact formula for deviations  when the Poincar\'e series has a certain form.

\begin{prop}\label{mobius}
Let $S=k[T_1,\ldots, T_n]$ and $R=S/I$ with $I\subseteq \fn^2$ a homogeneous ideal.
Assume there exist $c\in \NN$ and complex numbers $\{\alpha_j\}_{1\ls j\ls m}$ so that 
$P^R_k(z)=\frac{(1+z)^c}{\prod_{j=1}^{m}(1+\alpha_jz)}$.  
Then for every $i\gs 2$ \[\ee_i(R) = \frac{(-1)^{i}}{i}\sum_{d \mid i}\mu\Big(\frac{i}{d}\Big)\sum_{j=1}^{m}\alpha_{j}^d\] where $\mu$ is the M\"obius function. 
\end{prop}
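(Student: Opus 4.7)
The plan is to take logarithms of both expressions for $P^R_k(z)$, expand into power series, and then invert the resulting relation by Möbius inversion.

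\smallskip

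First I would take the logarithmic derivative (or just $\log$) of both sides of
\[
P^R_k(z)=\frac{(1+z)^c}{\prod_{j=1}^{m}(1+\alpha_jz)}=\prod_{i\geqslant 1}\frac{(1+z^{2i-1})^{\ee_{2i-1}}}{(1-z^{2i})^{\ee_{2i}}},
\]
where the second equality is \eqref{PoincareDeviations}. Using $\log(1+u)=\sum_{k\geqslant 1}\frac{(-1)^{k+1}}{k}u^k$ and $-\log(1-u)=\sum_{k\geqslant 1}\frac{u^k}{k}$, the right-hand side yields
\[
[z^n]\log P^R_k(z)=\sum_{\ell\mid n}\frac{\ell}{n}\,\ee_\ell\, a_{\ell,n/\ell},
\]
where $a_{\ell,k}=(-1)^{k+1}$ when $\ell$ is odd and $a_{\ell,k}=1$ when $\ell$ is even, whereas the given rational form yields
\[
[z^n]\log P^R_k(z)=\frac{(-1)^{n+1}}{n}\bigl(c-p_n\bigr),\quad\text{where } p_n:=\sum_{j=1}^{m}\alpha_j^n.
\]

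\smallskip

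The next step, and the only delicate one, is to collapse the two kinds of contributions on the left into a single tidy expression. When $\ell$ is odd and divides $n$, the quotient $n/\ell$ has the same parity as $n$, so $a_{\ell,n/\ell}=(-1)^{n+1}$; when $\ell$ is even, $n$ is forced to be even and $a_{\ell,n/\ell}=1=-(-1)^{n+1}$. Splitting cases according to the parity of $n$ and then multiplying through by $n$ and by $(-1)^{n}$, both cases unify into the single identity
\[
p_n-c=\sum_{\ell\mid n}(-1)^\ell \,\ell\,\ee_\ell,\qquad n\geqslant 1.
\]
This is the key intermediate formula, and verifying it is the main (though still elementary) obstacle: one has to keep the bookkeeping of parities straight.

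\smallskip

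Finally, I would apply classical Möbius inversion to the identity above. Setting $f(n):=(-1)^n n\,\ee_n$ and $g(n):=p_n-c$, we have $g(n)=\sum_{\ell\mid n}f(\ell)$, so $f(n)=\sum_{d\mid n}\mu(n/d)g(d)$. For $n\geqslant 2$ we have $\sum_{d\mid n}\mu(n/d)=0$, so the constant $c$ drops out and we obtain
\[
(-1)^n n\,\ee_n(R)=\sum_{d\mid n}\mu(n/d)\,p_d=\sum_{d\mid n}\mu(n/d)\sum_{j=1}^m\alpha_j^d.
\]
Dividing by $(-1)^n n$ (and using $1/(-1)^n=(-1)^n$) yields the stated formula. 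The hypothesis $i\geqslant 2$ is exactly what is needed to kill the unknown constant $c$ via $\sum_{d\mid i}\mu(i/d)=0$, which matches the fact that the case $i=1$ would recover $\ee_1=c-\sum_j\alpha_j=\edim R$ and require treating $c$ separately.
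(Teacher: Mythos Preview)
Your proof is correct and follows essentially the same approach as the paper: take logarithms of both product expressions for $P^R_k(z)$, compare coefficients to obtain $\sum_{\ell\mid n}(-1)^\ell\ell\,\ee_\ell = p_n - c$, and then M\"obius-invert. The only difference is that the paper shortcuts your parity bookkeeping by first substituting $z\mapsto -z$, which collapses the product $\prod_i(1+z^{2i-1})^{\ee_{2i-1}}/(1-z^{2i})^{\ee_{2i}}$ into the uniform form $\prod_i(1-z^i)^{(-1)^{i+1}\ee_i}$; otherwise the arguments are identical.
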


\begin{proof}
Set $\ee_i:=\ee_i(R)$. 
From   \eqref{PoincareDeviations} and the assumption we obtain 
\begin{equation}\label{tobln}
\prod_{i=1}^{\infty}(1-z^i)^{(-1)^{i+1}\ee_i}=\frac{(1-z)^c}{\prod_{j=1}^{m}(1-\alpha_jz)}.
\end{equation}
Proceeding as in \cite[p. 23]{QuadAlg}, we apply natural logarithm at both sides of  \eqref{tobln}, and compare the coefficient of $z^i$ of the corresponding Maclaurin series to obtain $\sum_{d | i}{(-1)^{d}}d\ee_d=-c+\sum_{j=1}^m\alpha_j^i.$
The result now follows by applying the M\"obius inversion formula to the arithmetic function $f(i)=(-1)^{i}i\ee_{i}$ and using the fact that $\sum_{d \mid i} \mu(d) = 0$ for every $i\gs 2$.
\end{proof}

Note that the assumption of Proposition \ref{mobius} is satisfied by several classes of rings, including complete intersections (cf. \cite{Tate}), monomial rings (cf. \cite{Backelin}),  Golod rings (cf. Section \ref{Extremal}), and Koszul algebras,
as it follows from  \eqref{PoncareHilbert}.
Given two numerical sequences $\{a_i\}_{i\gs1}, \{b_i\}_{i\gs1}$, 
the expression $a_i \sim b_i$ stands for asymptotic equality, i.e.  $\lim_{i\to \infty} \frac{a_i}{b_i}= 1$.

\begin{example}\label{exampMobius}
Consider a Koszul algebra $R$ with $h$-polynomial $h(z) = 1 + mz$ for some $m\in \NN$. 
This holds for instance for $I = (T_1, \ldots, T_m)^2\subseteq S= k[T_1, \ldots, T_m]$ or 
$I=(T_iT_j \, | \, 1 \ls i < j \ls m+1)\subseteq S=k[T_1, \ldots, T_{m+1}] $ (the edge ideal of the complete graph $\mathcal{K}_{m+1}$). 
By Proposition \ref{mobius} for each $i \gs 2$ we have 
\[\ee_i(R) = \frac{(-1)^{i}}{i}\sum_{d \mid i}\mu\Big(\frac{i}{d}\Big)(-m)^d \sim \frac{m^i}{i}.\]
\end{example}

We present next the first main result of this section. We show that deviations grow exponentially for Golod rings that are not complete intersections. For similar techniques applied to Betti numbers, see \cite{Sun}.

\begin{thm} \label{goloddev}
Let $S=k[T_1,\ldots,T_n]$ and $\fn=(T_1,\ldots,T_n)$. 
Let $I$ be a non-principal homogeneous ideal such that $I\subseteq \fn^2$ and $R=S/I$. 
If $R$ is a Golod ring, then there exists a real number $\rho>1$ so that $$\ee_i(R)\sim \frac{\rho^i}{i}.$$
\end{thm}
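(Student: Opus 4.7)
The plan is to combine Serre's formula for the Poincar\'e series of a Golod ring with Proposition~\ref{mobius}, and then carry out a Perron-Frobenius-type analysis of the denominator polynomial.

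\textbf{Setup via Proposition \ref{mobius}.} Since $R$ is Golod, Serre's equality gives
\[
P^R_k(z) \;=\; \frac{(1+z)^n}{q(z)}, \qquad q(z)\;=\;1-\sum_{i\geq 1}\beta_i^S(R)\,z^{i+1}.
\]
Writing $q(z)=\prod_{j=1}^{m}(1+\alpha_j z)$ and applying Proposition~\ref{mobius}, we obtain
\[
\varepsilon_i(R) \;=\; \frac{(-1)^{i}}{i}\sum_{d\mid i}\mu\!\Big(\frac{i}{d}\Big)\,p_d, \qquad p_d:=\sum_{j=1}^{m}\alpha_j^{d}.
\]
The task reduces to analysing the power sums $p_d$, which in turn reduces to locating the roots of $q(z)$ of minimal modulus.

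\textbf{Locating the dominant root.} Put $g(z):=\sum_{i\geq 1}\beta_i^S(R)z^{i+1}$, so $q=1-g$. The function $g$ is strictly increasing on $[0,\infty)$ with $g(0)=0$, and since $I$ is non-principal we have $\beta_1^S(R)\geq 2$, hence $g(1)\geq 2>1$. By the intermediate value theorem there is a unique $r\in(0,1)$ with $g(r)=1$; moreover $g'(r)>0$ shows that $r$ is a simple zero of $q$. Set $\rho:=1/r>1$; this is the $\rho$ claimed by the theorem, and one of the $\alpha_j$'s, say $\alpha_{j_0}$, equals $-\rho$.

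\textbf{Uniqueness of the root of minimal modulus.} I need to show that every other root of $q$ has strictly smaller modulus than $\rho$, equivalently, that $r$ is the unique complex zero of $q$ of modulus $r$. If $|z_0|=r$ and $q(z_0)=0$, then $1=|g(z_0)|\leq g(r)=1$, forcing every term $\beta_i^S(R)\,z_0^{i+1}$ to be a non-negative real number. Hence $z_0^{i+1}\in\mathbb{R}_{>0}$ for every $i$ with $\beta_i^S(R)\neq 0$. Since $I$ is non-principal, $\projdim_S R\geq 2$, so both $\beta_1^S(R)\neq 0$ and $\beta_2^S(R)\neq 0$; thus $z_0^2$ and $z_0^3$ are positive reals, which forces $z_0\in\mathbb{R}_{>0}$ and therefore $z_0=r$. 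Consequently $|\alpha_{j_0}|=\rho>|\alpha_j|$ for all $j\neq j_0$, and $\alpha_{j_0}=-\rho$ occurs with multiplicity one.

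\textbf{Asymptotics of the M\"obius sum.} Set $\rho':=\max\{|\alpha_j|:j\neq j_0\}<\rho$. Then
\[
p_d \;=\; (-\rho)^{d}+\sum_{j\neq j_0}\alpha_j^{d} \;=\;(-\rho)^{d}+O\bigl((\rho')^{d}\bigr).
\]
In $\varepsilon_i(R)=\tfrac{(-1)^i}{i}\sum_{d\mid i}\mu(i/d)p_d$, the summand $d=i$ contributes $\tfrac{(-1)^i}{i}(-\rho)^i(1+o(1))=\tfrac{\rho^i}{i}(1+o(1))$, while the remaining terms satisfy
\[
\frac{1}{i}\Big|\!\!\sum_{\substack{d\mid i\\ d<i}}\mu(i/d)p_d\Big|\;\leq\;\frac{C\,\tau(i)}{i}\,\rho^{\,i/2}\;=\;o\!\Big(\frac{\rho^{i}}{i}\Big),
\]
because every proper divisor $d$ of $i$ satisfies $d\leq i/2$, and $\tau(i)$ (the number of divisors of $i$) grows subexponentially. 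Therefore $\varepsilon_i(R)\sim \rho^i/i$.

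\textbf{Main obstacle.} The delicate step is the Perron-type uniqueness of the dominant root: without the gcd conclusion, roots of $q$ on the circle $|z|=r$ other than $r$ itself could arise and destroy the clean asymptotic. The key input making this work is precisely that $I$ being non-principal forces $\projdim_S R\geq 2$, so $q(z)$ has nonzero coefficients in two consecutive exponents $2$ and $3$, giving $\gcd=1$. Once this is in hand, the rest is a routine dominant-singularity estimate.
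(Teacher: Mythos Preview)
Your proof is correct and follows essentially the same route as the paper: both use the Golod expression $P^R_k(z)=(1+z)^n/q(z)$, apply Proposition~\ref{mobius}, and then show that the unique positive real root $r\in(0,1)$ of $q$ is simple and is the only root on $|z|=r$ by exploiting $\beta_1^S(R),\beta_2^S(R)>0$ (your equality-in-triangle-inequality argument and the paper's direct strict inequality are equivalent). Your derivation of $r\in(0,1)$ via the intermediate value theorem is more self-contained than the paper's appeal to \cite[4.2.3, 8.2.2]{Avramov6Lectures}, and you additionally spell out the M\"obius-sum asymptotic that the paper leaves implicit.
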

\begin{proof}
Let $r$ be the radius of convergence of $P_k^R(z)$. The ring $R$ is not regular, hence $r\ls 1$ as the power series $P_k^R(z)$ is infinite. 
Since $R$ is Golod and not a hypersurface, it is not a complete intersection (see \cite[p. 47, Remark]{Avramov6Lectures}), therefore $0<r<1$ by \cite[4.2.3, 8.2.2]{Avramov6Lectures}.  Moreover, since $R$ is Golod, we have $P_k^R(z) = \frac{(1+z)^n}{v(z)}$ inside $B_{\CC}(0, r)$, where $v(z)=1 - \sum_{i = 1}^n \beta_i^S(R)z^{i+1}$. 

Now, since  $P_k^R(z)$ is a rational function, it has singularities with absolute value $r$, and since it has nonnegative real coefficients, we conclude  that $r$ is a singularity of $P_k^R(z)$. Hence, $r$ is a root of $v(z)$. 
We will show that $r$ is a simple root of $v(z)$ and  that it is the only root on the boundary of $B_{\CC}(0, r)$. 
The result will follow by Proposition \ref{mobius} as $\alpha_1=-\frac{1}{r}$ and $|\alpha_j|<|\alpha_1|$ otherwise. 

Set $\beta_i := \beta_i^S(R)$. Since $I$ is not principal, we have $\beta_1,\, \beta_2 > 0$. Therefore $r$ is a simple root of $v(z)$ as  $v'(r) = -\sum_{i=1}^n (i+1)\beta_i r^i < 0$. If $y\neq r$ and $|y|=r$, then $y\not\in\RR_{\gs 0}$ and hence 
$$|\beta_1y^2 + \beta_2y^3|=r^2|\beta_1+\beta_2y| < r^2(\beta_1 + \beta_2 r)=\beta_1r^2+\beta_2r^3.$$ 
Thus \[\Big| \sum_{i=i}^n \beta_iy^{i+1}\Big| \ls |\beta_1y^2 + \beta_2y^3| + \Big|\sum_{i=3}^n \beta_iy^{i+1}\Big| < \sum_{i=1}^n \beta_i r^{i+1} = 1,\] i.e. $y$ is not a root of $v(z)$. 
The proof is completed.
\end{proof}

Let $\G$ be a graph with vertices $\{v_1,v_2,\ldots, v_n\}$. Denote by  
$$
I(\G)=(T_iT_j\,|\, v_iv_j \text{ is an edge of }\G)\subseteq S
$$ 
its  edge ideal.
By Fr\"oberg's theorem, an algebra $R$ presented by an edge ideal is Koszul. 

A graph with four vertices and edges $\big\{\{v_1,v_2\},\,\{v_1,v_3\},\,\{v_1,v_4\}\big\}$ is called a {\bf claw} (see Figure \ref{figClaw}). 
A simple graph $\G$ is said to be {\bf claw-free} if no claw appears as an induced subgraph of $\G$. 
Note that complete graphs are claw-free
(cf.  \eqref{exampMobius}).

\begin{figure}[h]
\centering
$$\xymatrix@R=2mm{& \bullet  \ar@{-}[rdd] \ar@{-}[ldd] \ar@{-}[dd] \\ & &   \\ \bullet & \bullet & \bullet }$$
\caption{A claw.}
\label{figClaw}
\end{figure}

The following theorem shows that the asymptotic behavior of deviations observed in Example \ref{exampMobius} holds more generally.

\begin{thm} \label{clawfreedev}
Let $S=k[T_1,\ldots,T_n]$ and  $R=S/I(\mathcal{G})$ where $\mathcal{G}$ is a graph. 
If $\G$ is claw-free 
then there exist $b\in \NN$ and $\rho \in \mathbb{R}$ with $\rho>1$ so that $$\ee_i(R)\sim \frac{b\rho^i}{i}.$$ 
\end{thm}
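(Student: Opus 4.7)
The plan is to combine three ingredients: Fr\"oberg's theorem on the Koszulness of edge rings, a direct Hilbert-series computation, and the theorem of Chudnovsky--Seymour that the independence polynomial of a claw-free graph has only real roots.

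Since $I(\G)$ is a quadratic monomial ideal, $R$ is Koszul by Fr\"oberg's theorem, so \eqref{PoncareHilbert} yields $P^R_k(z) = 1/\HS_R(-z)$. The monomials in $R$ are precisely those supported on independent sets of $\G$, and a direct count gives
\[
\HS_R(z) \;=\; \sum_{\sigma \text{ independent}} \Big(\tfrac{z}{1-z}\Big)^{|\sigma|} \;=\; I_{\G}\!\Big(\tfrac{z}{1-z}\Big),
\]
where $I_{\G}(w) = \sum_\sigma w^{|\sigma|}$ is the independence polynomial. By Chudnovsky--Seymour, $I_{\G}$ has only real roots, necessarily negative since the coefficients are positive; hence $I_{\G}(w) = \prod_{j=1}^{m}(1 + w/\gamma_j)$ for some $\gamma_j > 0$, where $m$ is the independence number of $\G$. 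Substituting and simplifying yields
\[
P^R_k(z) \;=\; \frac{(1+z)^m}{\prod_{j=1}^m (1 + \alpha_j z)}, \qquad \alpha_j \;:=\; 1 - \tfrac{1}{\gamma_j} \;\in\; (-\infty, 1).
\]

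With this presentation, Proposition \ref{mobius} applies and gives $\ee_i(R) = \tfrac{(-1)^i}{i}\sum_{d\mid i}\mu(i/d)\sum_{j}\alpha_j^d$ for $i \gs 2$. Let $\alpha_*$ be an $\alpha_j$ of maximum absolute value and let $b$ be its multiplicity. A routine estimate shows that the contribution of divisors $d < i$ is bounded in absolute value by $\tau(i)\cdot m\cdot |\alpha_*|^{i/2}$, while $\sum_{j}\alpha_j^i = b\,\alpha_*^i(1 + o(1))$, so $\ee_i(R) \sim (-1)^i b\,\alpha_*^i / i$. Because every $\alpha_j < 1$, as soon as $|\alpha_*| > 1$ the dominant $\alpha_*$ is forced to be negative (and no $\alpha_j = -\alpha_*$ can exist, since that would require $-\alpha_* < 1$); then $(-1)^i \alpha_*^i = |\alpha_*|^i$, giving the desired asymptotic $\ee_i(R) \sim b\rho^i/i$ with $\rho = |\alpha_*|$.

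It remains to establish $\rho > 1$. The poles of $P^R_k(z)$ occur exactly at $z = -1/\alpha_j$ (no cancellation with the zeros of $(1+z)^m$, since $\alpha_j \neq 1$), so the radius of convergence of $P^R_k(z)$ equals $1/|\alpha_*|$. Under the implicit non-complete-intersection hypothesis (without which $\ee_i$ vanishes for $i \gs 3$ and the conclusion cannot hold with $b \gs 1$), Avramov's theorem that $\beta_i^R(k)$ grows exponentially forces this radius to be strictly less than $1$, and hence $|\alpha_*| > 1$. The main obstacle is precisely this asymptotic analysis: one must simultaneously control the M\"obius sum uniformly in $i$, locate the dominant root of the denominator, and rule out cancellation between numerator and denominator. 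Real-rootedness from Chudnovsky--Seymour, together with the hard constraint $\alpha_j < 1$, is what makes the argument go through cleanly.
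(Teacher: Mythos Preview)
Your proof is correct and follows essentially the same route as the paper: Koszulness gives $P^R_k(z)=(1+z)^c/h(-z)$, Chudnovsky--Seymour forces the relevant roots to be real, the non-complete-intersection hypothesis (which you rightly flag as implicit) yields radius $<1$, and Proposition~\ref{mobius} then extracts the asymptotic once the dominant root is isolated. One small improvement in your version is that the explicit parametrization $\alpha_j=1-1/\gamma_j<1$ (coming from $\gamma_j>0$) immediately rules out a positive root of $h$ in $(0,1]$ and hence any second real root on the critical circle, whereas the paper appeals to \cite[4.8]{Uliczka} for that step.
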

\begin{proof}
Let $r$ be the radius of convergence of $P_k^R(z)$. 
By  \eqref{PoncareHilbert} we have that $P_k^R(z)=\frac{(1+z)^c}{h(-z)}$ inside $B_{\CC}(0, r)$, where $h(z)$ is the $h$-polynomial of $R$ and $c=\dim(R)$. Since $R$ is not a complete intersection, we conclude as in the proof of Theorem \ref{goloddev} that $0<r<1$ and $-r$ is a root of $h(z)$; let $b$ be the multiplicity of this root. The result will follow as in the proof of Theorem \ref{goloddev} via Proposition \ref{mobius} once we show that $-r$ is the only root of $h(z)$ on the boundary of $B_{\CC}(0, r)$. 

Let $\Delta$ be the independence complex of $\G$, i.e., the simplicial complex having $I(\G)$ as its Stanley-Reisner ideal. By \cite[1.1]{ChuSey}, $f(z)$, the $f$-polynomial of $\Delta$, has only real roots because $\G$ is a claw-free graph; hence so does $h(z)$ as these two polynomials satisfy the relation $h(z)=(1-z)^cf(\frac{z}{1-z})$ for every $z\neq 1$. Since $r$ is not a root of $h(z)$ (see \cite[4.8]{Uliczka}), the result follows.
\end{proof}

\begin{remark}
Theorem \ref{goloddev} provides another class of graphs $\mathcal{G}$ whose deviations grow exponentially, namely those whose complementary graph is chordal. 
By \cite{FrobergGraphs}, these graphs $\mathcal{G}$ are precisely the ones such that $I(\mathcal{G})$ has a linear resolution (note that in this case the characteristic of the field $k$ is not relevant), and thus $S/I(\mathcal{G})$ is Golod by \cite[Theorem 7]{BackelinFroberg}.
\end{remark}

\begin{example}
We  now examine in detail the deviations for  paths $\mathcal{P}_n$ and cycles $\mathcal{C}_n$ on $n$ vertices, with $n\gs 3$ (see also \cite[2.6]{BDGMS}). 
Consider the $f$-polynomial and the $h$-polynomial of the  independence complex of these graphs. 
The roots of $f$ for  $\mathcal{C}_n$ and $\mathcal{P}_n$ are determined explicitly in \cite{AlikhaniPeng} and are respectively
\begin{align*}
c_s^{(n)} &= -\frac{1}{2(1+\cos(\frac{2s-1}{n}\pi))} & s &= 1, 2, \ldots, \Big\lfloor\frac{n}{2}\Big\rfloor \\
p_s^{(n)} &= -\frac{1}{2(1+\cos(\frac{2s\pi}{n+2}))} & s &= 1, 2, \ldots, \Big\lfloor\frac{n+1}{2}\Big\rfloor.
\end{align*}
From  the relation between the $f$-polynomial and the $h$-polynomial
we have
\[
\tau\ne -1 \text{ is a root of } f \Leftrightarrow \frac{\tau}{1+\tau}\text{ is a root of } h.
\]
Note that $h(1) \neq 0$.
For each  $s, n$ we have $c_s^{(n)}, p_s^{(n)}<0$  
and in both cases $s=1$ gives the root  $\tau$ of $f$ with minimum modulus.
Note  that $\tau \in [-\frac{1}{2}, 0)$. 
Let $\sigma \not\in \{\tau, -1\}$ be another root of $f$.
If $\sigma < -1$ then $\lvert\frac{\sigma}{1+\sigma}\rvert > 1 \gs \lvert\frac{\tau}{1+\tau}\rvert$, 
while if $-1 < \sigma < \tau$ then $\lvert\frac{\sigma}{1+\sigma}\rvert > \lvert\frac{\tau}{1+\tau}\rvert$
because the map $x \mapsto \lvert\frac{x}{1+x}\rvert$ is strictly decreasing on $(-1, 0]$. 
It follows that the root of minimum modulus of $h$ is  the image of the root of minimum modulus of $f$ via the map $x \mapsto \frac{x}{1+x}$.
For both 
$\mathcal{C}_n$ and $\mathcal{P}_n$,
this root  converges to $-\frac{1}{3}$  as $n$ goes to infinity.
Applying Theorem \ref{clawfreedev} we get that 
\[
\begin{split}
\ee_i(S/I(\mathcal{C}_n)) &\sim \frac{(2(1+\cos(\frac{\pi}{n})) - 1)^i}{i},
\\ 
\ee_i(S/I(\mathcal{P}_n)) &\sim \frac{(2(1+\cos(\frac{2\pi}{n+2})) - 1)^i}{i}.
\end{split}
\] 
As $n$ goes to infinity, both expressions on the RHS approach $\frac{3^i}{i}$.
\end{example}

We observe that the integer $b$ in Theorem \ref{clawfreedev} can be arbitrary.

\begin{example}\label{anyb}
If $\G$ consists of $m$ connected components, each of them being a copy of $\mathcal{P}_4$, then $h(z) =(1+2z)^m$  and hence  $b=m$ in Theorem \ref{clawfreedev}. 
\end{example}

\begin{remark}Theorem \ref{clawfreedev} can also be used to produce examples of non-monomial Koszul algebras with exponential growth of deviations:
by  \eqref{PoncareHilbert}, two Koszul algebras with the same $h$-polynomial must have the same deviations for $i\gs 2$.
\end{remark}

We conclude by asking a combinatorial question. 

\begin{question}\label{singleroot}
Let $\G$ be a graph and let $h$ be the $h$-polynomial of $S/I(\G)$. Does $h$ have a unique (but not necessarily simple) root of minimum modulus?
\end{question}

Note that, since we already know that there exists a real negative root of $h$ of minimum modulus, an affirmative answer to Question \ref{singleroot} would imply the exponential growth of deviations of $S/I(\G)$ for any graph $\G$ such that $S/I(\G)$ is not a complete intersection.
Moreover, it is already known that the $f$-polynomial of the independence complex of $\G$ admits a unique (but not necessarily simple) root of minimum modulus (cf. \cite{GoldwurmSantini}, \cite{Csikvari}).

\section*{Acknowledgements}

We would like to thank the organizers, lecturers, and participants of Pragmatic 2014  where this project started. 
We thank especially Srikanth Iyengar and Aldo Conca for introducing us to the topic and for the enlightening discussions. 
The last four authors also thank their Ph.D. advisors, Aldo Conca, Craig Huneke, Bernd Ulrich, and Giulio Caviglia, respectively.
We acknowledge the use of the software \texttt{Macaulay2} \cite{Macaulay2}. The authors are also grateful to the referee for her or his helpful suggestions.

\end{document}